\newtheorem{theorem}{Theorem}[section]
\newtheorem{lemma}[theorem]{Lemma}
\newtheorem{corollary}[theorem]{Corollary}
\theoremstyle{definition}
\theoremstyle{remark}
\newtheorem{remark}[theorem]{Remark}
\numberwithin{equation}{section}
\newcommand{\abs}[1]{\left|#1\right|}
\newcommand{\Rm}{\textup{Rm}}
\newcommand{\Ric}{\textup{Ric}}
\newcommand{\Vol}{\textup{Vol}}
\newcommand{\diam}{\textup{diam}}
\newcommand{\FS}{\textup{FS}}
\newcommand{\Tr}{\textup{Tr}}
\newcommand{\eval}[2]{\left. #1 \right|_{#2}}
\newcommand{\R}{\mathbb{R}}
\newcommand{\C}{\mathbb{C}}
\newcommand{\CP}{\mathbb{P}}
\newcommand{\D}[2]{\frac{\partial #1}{\partial #2}}
\newcommand{\ddbar}{\partial \bar{\partial}}
\begin{document}
\title[On the collapsing rate of KRF with finite-time singularity]{On the collapsing rate of the K\"ahler-Ricci flow with finite-time singularity}
\author{Frederick Tsz-Ho Fong}
\begin{abstract}
We study the collapsing behavior of the K\"ahler-Ricci flow on a compact K\"ahler manifold $X$ admitting a holomorphic submersion $X \xrightarrow{\pi} \Sigma$ where $\Sigma$ is a K\"ahler manifold with $\dim_\C \Sigma < \dim_\C X$. We give cohomological and curvature conditions under which the fibers $\pi^{-1}(z)$, $z \in \Sigma$ collapse at the optimal rate $\textup{diam}_t (\pi^{-1}(z)) \sim (T-t)^{1/2}$.
\end{abstract}
\maketitle
\section{Introduction}

Let $X$ be a compact connected K\"ahler manifold with $\dim_\C = n$. Suppose $(\Sigma, \omega_\Sigma)$ is a K\"ahler manifold with $\dim_\C \Sigma = n-r < n$ and $X \xrightarrow{\pi} \Sigma$ is a surjective holomorphic submersion. For each $z \in \Sigma$, we call $\pi^{-1}(z)$ a fiber based at $z$, which is a complex submanifold of $X$ with $\dim_\C = r$. By the classical Ehresmann's fibration theorem \cite{Eh51}, $X$ is a smooth fiber bundle over $\Sigma$ and in particular $\pi^{-1}(z)$'s are diffeomorphic. Nonetheless, the induced complex structure on each $\pi^{-1}(z)$ may vary. In the case where all fibers $\pi^{-1}(z)$'s are biholomorphic, a classical theorem due to Fischer-Grauert  \cite{FG} asserts that $X$ is a holomorphic fiber bundle over $\Sigma$.

In this short note, we study the K\"ahler-Ricci flow on $X$ defined by
\begin{equation}\label{KRF}
\D{\omega_t}{t} = -\Ric(\omega_t), \quad \eval{\omega}{t=0} = \omega_0.
\end{equation}
The K\"ahler class $[\omega_t]$ at time $t$ is precisely given by $[\omega_0] - tc_1(X)$. The maximal existence time $T$ of \eqref{KRF} is uniquely determined by a result of Tian-Zhang in \cite{TZ06}, namely
\[T = \sup\{t : [\omega_0] - t c_1(X) > 0\}.\]
In particular, if $c_1(X) \cdot [\pi^{-1}(z)] > 0$ for some $z \in \Sigma$, we must have $T < \infty$. In this article, we only focus on the case where $\eqref{KRF}$ encounters finite-time singularity at $T < \infty$. We will study the collapsing behavior of the K\"ahler-Ricci flow \eqref{KRF} starting with an initial K\"ahler class $[\omega_0]$ such that as $t \to T$ the K\"ahler class $[\omega_t]$ limits to:
\begin{equation}\label{Kahler_class}
[\omega_0] - T c_1(X) = [\pi^*\omega_\Sigma].
\end{equation}

We will prove the following result:
\begin{theorem}\label{main}
Let $X \xrightarrow{\pi} \Sigma$ be a surjective holomorphic submersion. Suppose the K\"ahler-Ricci flow \eqref{KRF} on $X$ encounters finite-time singularity at $T < \infty$ and the initial K\"ahler class $[\omega_0]$ satisfies \eqref{Kahler_class}, we have
\begin{align*}
& \Ric(\omega_t) \leq B\omega_0 \text{ for some uniform constant } B > 0\\
\Rightarrow & \quad C^{-1} (T-t)^{1/2} \leq \textup{diam}_{\omega_t} \pi^{-1}(z) \leq C(T-t)^{1/2} \text{ for any } t \in [0, T), z \in \Sigma,
\end{align*}
where $C$ is a uniform constant depending only on $n, r, \omega_0, \omega_\Sigma$ and $B$.
\end{theorem}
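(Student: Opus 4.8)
The plan is to reduce the diameter estimate to a pair of metric bounds along the fibers, both phrased against the natural collapsing reference. Fix the reference forms $\hat\omega_t = \frac{1}{T}\big((T-t)\omega_0 + t\,\pi^*\omega_\Sigma\big)$, which are cohomologous to $\omega_t$ by \eqref{Kahler_class} and satisfy $\hat\omega_t|_F = \frac{T-t}{T}\,\omega_0|_F$ on every fiber $F=\pi^{-1}(z)$ since $\pi^*\omega_\Sigma|_F=0$. Writing $\omega_t = \hat\omega_t + \ddbar\varphi_t$ turns \eqref{KRF} into a parabolic complex Monge--Amp\`ere equation $\partial_t\varphi_t = \log(\omega_t^n/\Omega)$ for a fixed volume form $\Omega$ with $\Ric(\Omega)\in c_1(X)$. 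Before touching the PDE I would record the one exact fact that needs no estimate: since $[\omega_t|_F] = \tfrac{T-t}{T}[\omega_0|_F]$ and $[\pi^*\omega_\Sigma]$ dies on $F$, the fiber volume is $\Vol_{\omega_t}(F) = \frac{1}{r!}\big(\tfrac{T-t}{T}\big)^r [\omega_0|_F]^r$, i.e. $\Vol_{\omega_t}(F)\sim (T-t)^r$ for all $t$. This is the quantitative skeleton the two diameter bounds must be consistent with.

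The heart of the argument, and the step I expect to be the main obstacle, is the sharp metric upper bound $\omega_t \le C\,\hat\omega_t$; restricted to a fiber this reads $\omega_t|_F \le C\,(T-t)\,\omega_0|_F$, whence $\diam_{\omega_t}(F)\le \sqrt{C(T-t)}\,\diam_{\omega_0}(F)\le C'(T-t)^{1/2}$ uniformly over $\Sigma$ by compactness of the base and the bundle structure. This is exactly where the hypothesis $\Ric(\omega_t)\le B\omega_0$ enters. Integrating \eqref{KRF} from $t$ to $t'<T$ gives $\omega_t = \omega_{t'} + \int_t^{t'}\Ric(\omega_s)\,ds \le \omega_{t'} + B(t'-t)\,\omega_0$; restricting to $F$ and letting $t'\to T$, the boundary term $\omega_{t'}|_F$ must be shown to collapse. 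Its class $\tfrac{T-t'}{T}[\omega_0|_F]\to 0$, and a closed nonnegative $(1,1)$-form on the compact fiber with vanishing class is forced to vanish, so $\omega_{t'}|_F\to 0$; upgrading this to uniform convergence (via a weak a priori bound $\omega_t\le C\omega_0$ together with the standard higher-order estimates for $\varphi_t$) yields $\omega_t|_F \le B(T-t)\,\omega_0|_F$. Controlling this limiting boundary term uniformly in $z$ is the delicate point; the alternative is to prove $\omega_t\le C\hat\omega_t$ directly by a parabolic Schwarz-lemma maximum principle for $\tr_{\hat\omega_t}\omega_t$, for which the Ricci upper bound again supplies the decisive curvature term.

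For the lower bound I would argue by determinant pinching, which converts the exact volume into a pointwise metric bound once the upper bound is in hand. Standard estimates on the Monge--Amp\`ere potential (bounding $\varphi_t$ and $\partial_t\varphi_t - r\log(T-t)$) give the pointwise normalization $\omega_t^n \ge c\,(T-t)^r\,\Omega \ge c'\,(T-t)^r\,\omega_0^n$. Diagonalizing $\omega_t$ against $\omega_0$ at a point of $F$ in a frame adapted to the splitting (tangent-to-fiber)$\oplus$(horizontal), the upper bound $\omega_t\le C\hat\omega_t\le C''\omega_0$ caps every eigenvalue, with the $r$ fiber eigenvalues capped sharply at $C(T-t)$ and the $n-r$ horizontal ones at $C''$. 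Since the product of all eigenvalues is $\ge c'(T-t)^r$ while the product of the horizontal ones is $\le (C'')^{n-r}$, the product of the $r$ fiber eigenvalues is $\ge c''(T-t)^r$; combined with the sharp cap $C(T-t)$ on each of them, every fiber eigenvalue is $\ge c'''(T-t)$. Thus $\omega_t|_F \ge c'''(T-t)\,\omega_0|_F$, and taking two points of $F$ realizing a definite fraction of $\diam_{\omega_0}(F)$ gives $\diam_{\omega_t}(F)\ge C^{-1}(T-t)^{1/2}$.

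In summary, both inequalities collapse onto the single sharp estimate $\omega_t\le C\hat\omega_t$: it gives the upper diameter bound by restriction and feeds the determinant pinching that produces the lower one, while the cohomological fiber-volume identity supplies the matching power $(T-t)^r$. The Ricci upper bound $\Ric(\omega_t)\le B\omega_0$ is the one analytic input, used to drive the metric upper bound; the principal difficulty is making the collapse of the fiber metric uniform in $z\in\Sigma$ up to time $T$, which is what pins the exponent to the optimal $1/2$.
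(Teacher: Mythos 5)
Your proposal splits into two halves of very different status. The \emph{upper} bound mechanism --- integrating $\partial_t\omega_t=-\Ric(\omega_t)$ against the cap $\Ric(\omega_t)\le B\omega_0$ to get $\omega_t\le\omega_{t'}+B(t'-t)\omega_0$, then killing $\omega_{t'}|_F$ as $t'\to T$ using the collapse of the restricted class --- is genuinely different from the paper, which never integrates the flow in time (there the bound $\omega_t\le C\hat\omega_t$ appears only at the very end, by linear algebra from the metric \emph{lower} bound plus the volume-form upper bound of Lemma \ref{ddt_phi:bbd2}). Your idea works, but not by the route you suggest: near a finite-time singularity there are no ``standard higher-order estimates for $\varphi_t$'', and the static fact that a closed nonnegative $(1,1)$-form in the zero class vanishes does not apply to a sequence of nonzero forms with shrinking classes unless you already know the forms converge. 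The correct fix is soft, not analytic: $\int_F\omega_{t'}|_F\wedge(\omega_0|_F)^{r-1}=\frac{T-t'}{T}\int_F(\omega_0|_F)^r\to0$ together with $\omega_{t'}|_F>0$ gives $\Tr_{\omega_0|_F}(\omega_{t'}|_F)\to0$ in $L^1(F)$, hence a.e.\ along a subsequence; feeding this into your pointwise inequality and using continuity of the fixed form $\omega_t$ yields $\omega_t|_F\le B(T-t)\,\omega_0|_F$ everywhere, uniformly in $z$. So the upper diameter bound does follow from your argument, by a shorter and more elementary path than the paper's.

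The genuine gap is the lower bound. Your determinant pinching needs two inputs you never establish: (i) the sharp volume-form lower bound $\omega_t^n\ge c(T-t)^r\Omega$, and (ii) the horizontal cap $\omega_t\le C\omega_0$ (your integration argument controls only fiber directions, since $\omega_{t'}$ does not collapse horizontally, and integrating an \emph{upper} Ricci bound can never produce an \emph{upper} metric bound). Neither is ``standard'', and (i) is precisely the crux of the whole problem in this collapsing setting: the standard maximum-principle quantity $(T-t)\dot\varphi_t+\varphi_t+nt$ satisfies $\square_{\omega_t}\bigl((T-t)\dot\varphi_t+\varphi_t+nt\bigr)=\Tr_{\omega_t}\pi^*\omega_\Sigma+r\ge0$, but this only yields $\dot\varphi_t\ge-C/(T-t)$, i.e.\ $\omega_t^n\ge e^{-C/(T-t)}(T-t)^r\Omega$, off by an exponential factor. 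The entire second section of the paper and the maximum principle for $Q=\log((T-t)\Tr_{\omega_t}\omega_0)-\frac{A}{T-t}(\varphi_t-\Phi_t)$ exist to supply exactly this missing piece: the fiberwise average $\Phi_t$ and the fiberwise Yau $L^\infty$-estimate give $|\varphi_t-\Phi_t|\le C(T-t)$ (Lemma \ref{potential_bbd}), and the maximum principle then produces $\omega_t\ge C^{-1}(T-t)\omega_0$, from which your (i) and (ii) both follow --- the logical order is the reverse of the one you assume. Your fallback, a ``parabolic Schwarz-lemma maximum principle for $\Tr_{\hat\omega_t}\omega_t$'', is not available either: the Schwarz lemma bounds the trace of a \emph{fixed} form against the evolving metric (that is how $\Tr_{\omega_t}\pi^*\omega_\Sigma\le C$ is proved), and a direct maximum-principle bound on the trace against the degenerating reference $\hat\omega_t$ is essentially the theorem itself. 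As written, your proposal proves the upper diameter bound but not the lower one.
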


In the case where $X$ is a $\CP^r$-bundle over $\Sigma$ and $X \xrightarrow{\pi} \Sigma$ is the bundle map, the fiber-collapsing behavior of the K\"ahler-Ricci flow was studied in \cite{SW09, SSW11} and by the author in \cite{Fong1}. In \cite{SW09}, Song-Weinkove studied the case where $r = 1$ and $(\Sigma, \omega_\Sigma) = (\CP^{n-1}, \omega_{\FS})$ and the initial metric $\omega_0$ on $X$ is constructed by Calabi's Ansatz (a cohomogeneity-1 symmetry). In this case, the K\"ahler class $[\omega_0]$ can be expressed as $- a_0 [\Sigma_0] + b_0 [\Sigma_\infty]$ with $0 < a_0 < b_0$, where $[\Sigma_0]$ and $[\Sigma_\infty]$ denote the Poincar\'e duals of the zero and infinity sections respectively, and \eqref{Kahler_class} can be achieved by a suitable choice of $a_0$ and $b_0$.  It was shown that under the condition \eqref{Kahler_class} the K\"ahler-Ricci flow collapses the $\CP^1$-fiber. In \cite{Fong1}, the author extended this result to allow $\Sigma$ to be any K\"ahler-Einstein manifold and proved that the singularity must be of Type I modelled on $\C^{n-1} \times \CP^1$ when the initial metric has Calabi symmetry. In \cite{SSW11}, Song-Sz\'ekelyhidi-Weinkove generalized this fiber-collapsing result to $\CP^r$-bundles over a smooth projective variety $\Sigma$ and removed the symmetry assumption.

In \cite{SSW11}, it was proved that the diameters of fibers decay at the rate of at most $(T-t)^{1/3}$ which is sufficient in their proof of Gromov-Hausdorff convergence towards the base manifold. It is more desirable for the diameters of fibers decay at a rate $\sim (T-t)^{1/2}$ as far as rescaling analysis (as in \cite{Fong1}) is concerned. Rescaling analysis has been fundamental in the study of singularity formations of the Ricci flow. The Cheeger-Gromov limit obtained from a suitably rescaled and dilated sequence of the Ricci flow encodes crucial geometric information of the singularity region near the singular time. In case of Type I singularity, the rescaling factor of the metric tensor is uniformly equivalent to $(T-t)^{-1}$. If a fiber $\pi^{-1}(z)$ shrinks at a rate such that $C^{-1}(T-t)^{1/2} \leq \diam_{\omega_t} (\pi^{-1}(z)) \leq C(T-t)^{1/2}$, then after the $(T-t)^{-1}$-rescaling the diameter of the fiber $F$ remains bounded away from $0$ and $\infty$. It is conjectured in \cite{SSW11, SW_KRF} that the diameter decay can be improved to $O((T-t)^{1/2})$. In this article, we give a partially affirmative answer to this conjecture under the assumption that the Ricci curvature stays bounded along the flow.

It is interesting to note that in the very special case where $\Sigma$ is a point, the only ``fiber" is simply the whole manifold $X$ and \eqref{Kahler_class} says that $\omega_0$ is in the canonical class, i.e. $[\omega_0] = Tc_1(X)$. It is well-known by an unpublished result of Perelman (see \cite{SeT08}) that the diameter stays bounded along the normalized K\"ahler-Ricci flow, and equivalently, decays at a rate of at most $(T-t)^{1/2}$ along the unnormalized flow \eqref{KRF}. Note that the Ricci curvature bound is not needed in Perelman's work. The proof involves the use of the monotonicity of the $\mathcal{W}$-functional introduced in \cite{P1}.

We mainly focus on finite-time singularity in this article. The collapsing behavior of the K\"ahler-Ricci flow with long-time existence was studied by, for instance, Song-Tian in \cite{ST07} when $X$ is a minimal elliptic surface. When the base Riemann surface has genus greater than $1$, it was proved in \cite{ST07} that the nonsingular Calabi-Yau fibers collapse at the rate $\sim e^{-t}$ under the normalized K\"ahler-Ricci flow $\partial_t \tilde{\omega_t} = -\Ric(\tilde\omega_t) - \tilde\omega_t$.

\newpage
\section{Some estimates}
In this section, we give the estimates necessary to establish Theorem \ref{main}. The approach to establish the main result is motivated by the techniques adopted in \cite{ST07, Z09, To10} etc.

We rewrite the K\"ahler-Ricci flow \eqref{KRF} as a parabolic complex Monge-Amp\`ere equation in the same way as in \cite{ST07, SW09, SSW11} etc. We define a family of reference metrics $\hat{\omega}_t$ in the same K\"ahler class as $\omega_t$ by:
\begin{equation}
\hat{\omega}_t = \frac{1}{T}((T-t)\omega_0 + t\pi^*\omega_\Sigma).
\end{equation}
One can argue that $[\omega_t] = [\hat\omega_t]$ by observing that $[\omega_t] = [\omega_0] - t c_1(X)$ which is a linear path connecting $[\omega_0]$ and $[\pi^*\omega_\Sigma]$ as $t$ goes from $0$ to $T$ by our assumption \eqref{Kahler_class}. By the $\ddbar$-lemma, there exists a family of smooth functions $\varphi_t$ such that $\omega_t = \hat\omega_t + \sqrt{-1}\ddbar\varphi_t$. Let $\Omega$ be a volume form on $X$ such that
\begin{equation}\label{Omega}
\sqrt{-1}\ddbar\log\Omega = \D{\hat\omega_t}{t} = \frac{1}{T}(\pi^*\omega_\Sigma - \omega_0).
\end{equation}
Then it is easy to check that the K\"ahler-Ricci flow \eqref{KRF} is equivalent to the following complex Monge-Amp\`ere equation:
\begin{equation}\label{MAP}
\D{\varphi_t}{t} = \log \frac{(\hat\omega_t + \sqrt{-1}\ddbar\varphi_t)^n}{(T-t)^r \Omega}.
\end{equation}
In all the estimates below, we will denote $C > 0$ to be a uniform constant which depends only on $n, r, \omega_0, \omega_\Sigma$ and $B$, and may change from line to line. We first prove the following:
\begin{lemma}\label{ddt_phi:bbd1}
Given that $\Ric(\omega_t) \leq B\omega_0$ for some uniform constant $B > 0$, then along \eqref{MAP} there exists a uniform constant $C = C(n, r, \omega_0, \omega_\Sigma, B)$ such that
\begin{equation}
\D{\varphi_t}{t} \leq \frac{1}{[\omega_0]^n} \int_X \log \frac{\omega_t^n}{(T-t)^r \Omega} \omega_0^n + C.
\end{equation}
\end{lemma}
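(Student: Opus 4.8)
The plan is to read the asserted inequality as a sup-versus-average estimate for the \emph{fixed} background metric $\omega_0$ and to prove it by the Green's function of $\omega_0$. Set $\dot\varphi := \D{\varphi_t}{t}$. Directly from \eqref{MAP} we have $\dot\varphi = \log\frac{\omega_t^n}{(T-t)^r\Omega}$, so the claim is exactly $\dot\varphi(x,t)\leq\frac{1}{[\omega_0]^n}\int_X\dot\varphi\,\omega_0^n + C$. The key observation is that the only genuinely singular contribution to $\dot\varphi$, namely $-r\log(T-t)$, is constant in the space variable and is therefore annihilated by $\sqrt{-1}\ddbar$. Using $\Ric(\omega_t) = -\sqrt{-1}\ddbar\log\omega_t^n$ together with \eqref{Omega}, I would compute
\[\sqrt{-1}\ddbar\dot\varphi = -\Ric(\omega_t) - \frac{1}{T}\left(\pi^*\omega_\Sigma - \omega_0\right).\]
Thus the term blowing up as $t\to T$ never enters the spatial complex Hessian of $\dot\varphi$, which is what makes a uniform estimate possible.

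Next I would extract a uniform lower bound on $\Delta_{\omega_0}\dot\varphi$, where $\Delta_{\omega_0} := \tr_{\omega_0}\sqrt{-1}\ddbar$. Tracing the identity above against $\omega_0$ gives
\[\Delta_{\omega_0}\dot\varphi = -\tr_{\omega_0}\Ric(\omega_t) - \frac{1}{T}\tr_{\omega_0}\left(\pi^*\omega_\Sigma - \omega_0\right).\]
This is the only place the hypothesis enters: since tracing against the positive form $\omega_0$ is monotone, $\Ric(\omega_t)\leq B\omega_0$ yields $\tr_{\omega_0}\Ric(\omega_t)\leq nB$, hence $-\tr_{\omega_0}\Ric(\omega_t)\geq -nB$. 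The remaining terms are manifestly uniformly bounded: $\pi^*\omega_\Sigma$ is a fixed nonnegative $(1,1)$-form, so $0\leq\tr_{\omega_0}\pi^*\omega_\Sigma\leq C$ on the compact manifold $X$, while $\tr_{\omega_0}\omega_0 = n$. Combining these gives $\Delta_{\omega_0}\dot\varphi\geq -K$ for a uniform constant $K = K(n,\omega_0,\omega_\Sigma,B)$.

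Finally I would feed this one-sided Laplacian bound into the Green's function representation for $(X,\omega_0)$. Let $G_0(x,y)$ be the Green's function of $\Delta_{\omega_0}$ normalized by $\int_X G_0(x,\cdot)\,\omega_0^n = 0$; with our convention it is uniformly bounded above, say $G_0\leq A$ with $A = A(\omega_0)$, and
\[\dot\varphi(x) = \frac{1}{V}\int_X\dot\varphi\,\omega_0^n + \int_X G_0(x,y)\,\Delta_{\omega_0}\dot\varphi(y)\,\omega_0^n(y),\qquad V := [\omega_0]^n.\]
Since $\int_X\Delta_{\omega_0}\dot\varphi\,\omega_0^n = 0$, I may replace $G_0$ by $G_0 - A\leq 0$ in the integral; the bound $\Delta_{\omega_0}\dot\varphi\geq -K$ then gives pointwise $(G_0-A)\Delta_{\omega_0}\dot\varphi\leq K(A-G_0)$, so the integral is at most $K\int_X(A-G_0)\,\omega_0^n = KAV$. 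This yields $\dot\varphi(x)\leq\frac{1}{[\omega_0]^n}\int_X\dot\varphi\,\omega_0^n + KAV$, i.e. the claim with $C = KAV$.

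I expect the only real obstacle to be uniformity as $t\to T$, and this is dispatched at the very first step: the singular term $-r\log(T-t)$ is spatially constant and so is invisible both to $\sqrt{-1}\ddbar$ and to the Green's representation, leaving only terms controlled either by the Ricci hypothesis or by the fixed geometry of $\omega_0$ and $\pi^*\omega_\Sigma$. It is essential that the argument uses the Green's function of the fixed metric $\omega_0$ rather than that of the degenerating metric $\omega_t$, for which no uniform bounds are available near the singular time.
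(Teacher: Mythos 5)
Your proof is correct and follows essentially the same route as the paper: both compute $\sqrt{-1}\ddbar\,\D{\varphi_t}{t} = -\Ric(\omega_t) - \frac{1}{T}(\pi^*\omega_\Sigma - \omega_0)$ from \eqref{MAP} and \eqref{Omega}, use $\Ric(\omega_t)\leq B\omega_0$ to get a uniform one-sided bound on $\Delta_{\omega_0}\D{\varphi_t}{t}$, and conclude via the Green's function of the fixed metric $\omega_0$. The only cosmetic difference is the normalization of the Green's function (the paper takes $G\geq 0$ and uses that $\int_X G(x,y)\,\omega_0^n(y)$ is constant, whereas you take $G_0$ with zero average and shift by its upper bound), which amounts to the same trick.
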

\begin{proof}
We let $G(x,y) : X \times X - \{x = y\} \to \R$ be a nonnegative Green's function with respect to $\omega_0$. Then we have for any $x \in X$,
\begin{align*}
\D{\varphi_t}{t}(x) - \frac{1}{[\omega_0]^n} \int_X \D{\varphi_t}{t} \omega_0^n & = -\frac{1}{[\omega_0]^n} \int_X G(x, \cdot) \Delta_{\omega_0}\left(\D{\varphi_t}{t}\right) \omega_0^n\\
& = -\frac{1}{[\omega_0]^n} \int_X G(x, \cdot) \Delta_{\omega_0} \log \frac{\omega_t^n}{(T-t)^r \Omega} \omega_0^n\\
& = \int_X G(x,\cdot) \Tr_{\omega_0} \left(\Ric(\omega_t) + \D{\hat\omega_t}{t}\right) \omega_0^n\\
& = \int_X G(x,\cdot) \left\{\Tr_{\omega_0} \Ric(\omega_t) + \frac{1}{T} \Tr_{\omega_0}(\pi^*\omega_\Sigma - \omega_0)\right\} \omega_0^n,
\end{align*}
where we have used \eqref{Omega} in the third and fourth steps. By our assumption $\Ric(\omega_t) \leq B\omega_0$, it is easy to see that
\begin{equation}
\Tr_{\omega_0} \Ric(\omega_t) + \frac{1}{T} \Tr_{\omega_0}(\pi^*\omega_\Sigma - \omega_0) \leq C.
\end{equation}
for some uniform constant $C = C(n, r, \omega_0, \omega_\Sigma, B) > 0$.

Therefore, we have for $x \in X$,
\[\D{\varphi_t}{t}(x) - \frac{1}{[\omega_0]^n} \int_X \D{\varphi_t}{t} \omega_0^n \leq C \int_X G(x, y) \omega_0^n(y).\]
Note that $\int_X G(x,y) \omega_0^n(y) \equiv \text{constant}$. It completes the proof of the lemma.
\end{proof}
Next we derive a uniform upper bound for $\D{\varphi_t}{t}$. By the previous lemma, it suffices to bound the total volume of $(X, \omega_t)$, which depends only on $[\omega_t]$. Let us consider the reference metric $\hat\omega_t = \frac{1}{T}((T-t)\omega_0 + t\pi^*\omega_\Sigma)$. We have
\begin{equation*}
\hat\omega_t^n = \frac{1}{T^n} \sum_{k = 0}^n {n \choose k} (T-t)^k t^{n-k} \omega_0^k \wedge (\pi^*\omega_\Sigma)^{n-k}.
\end{equation*}
Since $\Sigma$ has complex dimension $n-r$, we have $(\pi^*\omega_\Sigma)^{n-k} = 0$ for any $k < r$. Therefore, we have
\begin{align*}
\hat\omega_t^n & = \frac{1}{T^n} \sum_{k=r}^n {n \choose k} (T-t)^k t^{n-k} \omega_0^k \wedge (\pi^*\omega_\Sigma)^{n-k}\\
& = \frac{(T-t)^r}{T^n} \left\{ {n \choose r} t^{n-r} \omega_0^r \wedge (\pi^*\omega_\Sigma)^{n-r} + \ldots + (T-t)^{n-r} \omega_0^n\right\}.
\end{align*}
It is not difficult to see that there exists a uniform constant $C = C(n, r, \omega_0, \omega_\Sigma) > 0$ such that for any $t \in [0,T)$, we have
\begin{equation}\label{volume_form_hat}
C^{-1} (T-t)^r \omega_0^n \leq \hat\omega_t^n \leq C(T-t)^r \omega_0^n.
\end{equation}
By Jensen's Inequality, we have
\begin{align*}
\int_X \log\frac{\omega_t^n}{(T-t)^r \Omega} \frac{\omega_0^n}{[\omega_0]^n} & \leq \log \left(\int_X \frac{\omega_t^n}{(T-t)^r \Omega} \frac{\omega_0^n}{[\omega_0]^n}\right)\\
& \leq \log \left(\frac{C}{[\omega_0]^n} \int_X \frac{\hat\omega_t^n}{(T-t)^r}\right)\\
& \leq \log C \qquad \text{using \eqref{volume_form_hat}}.
\end{align*}
Combining with Lemma \ref{ddt_phi:bbd1}, we have established the following:
\begin{lemma}\label{ddt_phi:bbd2}
There exists a uniform constant $C = C(n, r, \omega_0, \omega_\Sigma, B) > 0$ such that for any $t \in [0, T)$, we have
\begin{equation}\label{volume_form_bbd}
\frac{\omega_t^n}{(T-t)^r \Omega} = e^{\D{\varphi_t}{t}} \leq C.
\end{equation}
\end{lemma}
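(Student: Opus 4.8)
The asserted identity $\frac{\omega_t^n}{(T-t)^r\Omega} = e^{\D{\varphi_t}{t}}$ is immediate and requires no work: since $\omega_t = \hat\omega_t + \sqrt{-1}\ddbar\varphi_t$, exponentiating the parabolic Monge-Amp\`ere equation \eqref{MAP} yields it at once. Hence the real content of the lemma is the uniform \emph{upper} bound, and the plan is to reduce the pointwise bound on $\D{\varphi_t}{t}$ to an averaged one and then close the estimate using Jensen's inequality together with the volume computation already carried out above.

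First I would invoke Lemma \ref{ddt_phi:bbd1} to replace the pointwise quantity $\D{\varphi_t}{t}(x)$ by its $\omega_0^n$-average, at the cost of a uniform additive constant; this is precisely the step that consumes the hypothesis $\Ric(\omega_t)\leq B\omega_0$, through the nonnegative Green's function representation. After this reduction it suffices to bound $\frac{1}{[\omega_0]^n}\int_X \log\frac{\omega_t^n}{(T-t)^r\Omega}\,\omega_0^n$ from above by a uniform constant.

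To bound this average I would apply Jensen's inequality against the probability measure $\frac{\omega_0^n}{[\omega_0]^n}$, turning the integral of the logarithm into the logarithm of the integral of the ratio $\frac{\omega_t^n}{(T-t)^r\Omega}$. What remains is a pure volume count. Using that $\omega_0^n \leq C\,\Omega$, since these are two fixed smooth volume forms on a compact manifold, one passes from $\int_X \frac{\omega_t^n}{(T-t)^r\Omega}\,\omega_0^n$ to $\frac{C}{(T-t)^r}\int_X \omega_t^n$; because $[\omega_t] = [\hat\omega_t]$, the total volume is a cohomological invariant, so $\int_X \omega_t^n = \int_X \hat\omega_t^n$; and finally the upper bound in \eqref{volume_form_hat} gives $\int_X \hat\omega_t^n \leq C\,(T-t)^r\,[\omega_0]^n$. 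The factors of $(T-t)^r$ cancel exactly, leaving a quantity bounded uniformly in $t$, whence the logarithm is at most $\log C$ and the lemma follows upon exponentiation.

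I expect no genuine obstacle at the level of this lemma, since both inputs — the averaging estimate of Lemma \ref{ddt_phi:bbd1} and the two-sided control \eqref{volume_form_hat} of $\hat\omega_t^n$ — are already in hand; the only point demanding care is the bookkeeping of the power $(T-t)^r$. That this exponent is exactly right is not an accident: the limiting class $[\pi^*\omega_\Sigma]$ degenerates precisely along the $r$-dimensional fibers, so $\omega_t^n$ vanishes to order $r$ in $(T-t)$, and the normalization built into \eqref{MAP} is calibrated to match this degeneration. The cohomological volume identity combined with \eqref{volume_form_hat} is what encodes this matching, and it is the real reason the estimate closes with a bound independent of $t$.
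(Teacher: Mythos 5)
Your proposal is correct and follows essentially the same route as the paper: reduce to the fiber-average via Lemma \ref{ddt_phi:bbd1}, apply Jensen's inequality against the probability measure $\omega_0^n/[\omega_0]^n$, use $\omega_0^n \leq C\,\Omega$, the cohomological invariance $\int_X \omega_t^n = \int_X \hat\omega_t^n$, and the two-sided bound \eqref{volume_form_hat} to cancel the $(T-t)^r$ factors. The only difference is that you spell out explicitly the steps (volume-form comparison and cohomological volume identity) that the paper leaves implicit in its Jensen computation.
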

Lemma \ref{ddt_phi:bbd2} gives a pointwise bound for the volume form, which will be used in the next lemma to show the K\"ahler potential $\varphi_t$ is decay at a rate of $O(T-t)$ after a suitable normalization.

For each $z \in \Sigma$ and $t \in [0,T)$, we denote $\omega_{t,z}$ to be the restriction of $\omega_t$ on the fiber $\pi^{-1}(z)$. For each $t \in [0,T)$, we define a function $\Phi_t : \Sigma \to \R$ by
\[\Phi_t (z) = \frac{1}{\Vol_{\omega_{0,z}}(\pi^{-1}(z))}\int_{\pi^{-1}(z)} \varphi_t ~ \omega_{0,z}^r\]
which is the average value of $\varphi_t$ over each fiber $\pi^{-1}(z)$. The pull-back $\pi^*\Phi_t$ is then a function defined on $X$. For simplicity, we also denote $\pi^*\Phi_t$ by $\Phi_t$.
\begin{lemma}\label{potential_bbd}
There exists a uniform constant $C = C(n, r, \omega_0, \omega_\Sigma, B)$ such that for any $t \in [0,T)$, we have
\begin{equation}
\abs{\frac{\varphi_t - \Phi_t}{T-t}} \leq C
\end{equation}
\end{lemma}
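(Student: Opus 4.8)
The plan is to reduce the global estimate to a fiberwise oscillation bound and to treat the two inequalities by different mechanisms. Fix $z \in \Sigma$, write $F = \pi^{-1}(z)$ and $\omega_{0,z} = \omega_0|_F$. Since $\pi$ is constant on $F$, we have $\pi^*\omega_\Sigma|_F = 0$, so $\hat\omega_t|_F = \tfrac{T-t}{T}\omega_{0,z}$; as $F$ is a complex submanifold the restriction commutes with $\ddbar$, whence $\omega_t|_F = \tfrac{T-t}{T}\omega_{0,z} + \sqrt{-1}\ddbar(\varphi_t|_F) > 0$ with $\ddbar$ taken on $F$. Setting $u := \tfrac{T}{T-t}\varphi_t|_F$, this says exactly that $u$ is $\omega_{0,z}$-plurisubharmonic on the compact $r$-dimensional manifold $(F,\omega_{0,z})$, with fiber average $\bar u = \tfrac{T}{T-t}\Phi_t(z)$. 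Thus $u - \bar u = \tfrac{T}{T-t}(\varphi_t - \Phi_t)|_F$ and the lemma is equivalent to a uniform bound $\abs{u - \bar u} \le C$. Since $X$ is compact, $\Sigma = \pi(X)$ is compact, so the fibers $(F,\omega_{0,z})$ form a compact family and all geometric constants below (Green's function, Sobolev, and $C^0$-estimate constants) may be taken uniform in $z$.

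For the upper bound I would argue softly, using only positivity. Tracing $\omega_{0,z} + \sqrt{-1}\ddbar u \ge 0$ against $\omega_{0,z}$ gives $\Delta_{\omega_{0,z}} u \ge -r$, and representing $u$ by the normalized (uniformly bounded-below) Green's function $G_z$ of $(F,\omega_{0,z})$,
\[ u(x) - \bar u = \int_F G_z(x,\cdot)\,(-\Delta_{\omega_{0,z}} u)\,\omega_{0,z}^r, \]
the bound $-\Delta_{\omega_{0,z}} u \le r$ together with the lower bound on $G_z$ yields $\sup_F(u - \bar u) \le C$. No curvature hypothesis enters; this is the easy half.

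The lower bound is the hard half, and it is where $\Ric(\omega_t) \le B\omega_0$ must enter. The natural route is a $C^0$ (Kolodziej-type) estimate for the fiber Monge--Amp\`ere equation $(\omega_{0,z} + \sqrt{-1}\ddbar u)^r = \tilde f\,\omega_{0,z}^r$, with $\tilde f = (\tfrac{T}{T-t})^r (\omega_t|_F)^r / \omega_{0,z}^r$, which would give $\inf_F u \ge \bar u - C$ once $\tilde f$ is controlled uniformly in $L^p$ for some $p > 1$ (a pointwise bound $\tilde f \le C$ being cleaner and sufficient). This reduces everything to the fiber volume density estimate $(\omega_t|_F)^r \le C(T-t)^r \omega_{0,z}^r$. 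To obtain it from Lemma \ref{ddt_phi:bbd2} I would use the pointwise identity $\omega_t^n = \binom{n}{r}(\omega_t|_F)^r \wedge (\omega_t|_H)^{n-r}$ arising from the $\omega_t$-orthogonal splitting $TX = V \oplus H$ into the vertical $V = \ker d\pi$ and its complement $H$: since $\omega_t^n \le C(T-t)^r \Omega \le C(T-t)^r \omega_0^n$, the fiber density is controlled precisely when the horizontal volume $(\omega_t|_H)^{n-r}$ is bounded below, i.e. when $\omega_t \ge c\,\pi^*\omega_\Sigma$.

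The hard part will therefore be this horizontal non-degeneracy, which is exactly where the Ricci upper bound should be spent. From $\D{\omega_t}{t} = -\Ric(\omega_t) \ge -B\omega_0$ one gets $\omega_t \ge (1 - Bt)\omega_0$, settling the estimate on any interval $[0,t_0]$ with $t_0 < 1/B$ (where $T - t$ is in any case bounded below). For $t$ near $T$ I would instead try to prove $\Tr_{\omega_t}(\pi^*\omega_\Sigma) \le C$ by a maximum-principle argument for the parabolic Schwarz-lemma quantity $\log \Tr_{\omega_t}(\pi^*\omega_\Sigma)$ (here compactness of $\Sigma$ bounds the bisectional curvature of $\omega_\Sigma$), combined with $\Delta_{\omega_t}\varphi_t = n - \Tr_{\omega_t}\hat\omega_t$ and the volume bound of Lemma \ref{ddt_phi:bbd2}, yielding $\omega_t \ge c\,\pi^*\omega_\Sigma$. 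The delicate point I anticipate is that this maximum principle wants a lower bound on $\D{\varphi_t}{t}$, which is not available; absorbing the resulting error terms using only the Ricci upper bound is the heart of the matter.
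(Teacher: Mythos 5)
Your architecture coincides with the paper's own proof: restrict to the fiber $F=\pi^{-1}(z)$, observe $\eval{\hat\omega_t}{F}=\frac{T-t}{T}\omega_{0,z}$, recast the restricted metric identity as a fiberwise Monge--Amp\`ere equation for the normalized potential (your $u-\bar u$ is $T\tilde\varphi_t$ in the paper's notation), reduce everything to the fiber volume density bound $\eval{\omega_t}{F}^r\le C(T-t)^r\omega_{0,z}^r$, and conclude by a uniform $C^0$ estimate with the zero-average normalization (the paper applies Yau's estimate, which gives both inequalities at once, so your separate Green's-function upper bound is correct but redundant). Your derivation of the density bound from Lemma \ref{ddt_phi:bbd2} via the $\omega_t$-orthogonal splitting $TX=V\oplus H$ is equivalent to the paper's, which instead writes $\omega_{t,z}^r/\omega_{0,z}^r=\bigl(\omega_t^r\wedge(\pi^*\omega_\Sigma)^{n-r}\bigr)/\bigl(\omega_0^r\wedge(\pi^*\omega_\Sigma)^{n-r}\bigr)$ and factors through $\omega_t^n$; both reductions need exactly Lemma \ref{ddt_phi:bbd2} plus $\Tr_{\omega_t}\pi^*\omega_\Sigma\le C$.

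The one step you leave open---the Schwarz-lemma bound, which you call the heart of the matter---is not where the difficulty, nor the Ricci hypothesis, actually lives. The paper simply quotes $\Tr_{\omega_t}\pi^*\omega_\Sigma\le C$ as well known under the cohomological condition \eqref{Kahler_class} (see \cite{SW09, SSW11}), and the obstruction you anticipate dissolves by a standard absorption trick: the maximum principle for $Q=\log\Tr_{\omega_t}\pi^*\omega_\Sigma-A\varphi_t$ needs no lower bound on $\D{\varphi_t}{t}$. Indeed, from \eqref{MAP} one has $-A\D{\varphi_t}{t}=A\log X$ with $X=\frac{(T-t)^r\Omega}{\omega_t^n}$, while by the arithmetic-geometric mean inequality and \eqref{volume_form_hat},
\begin{equation*}
\Tr_{\omega_t}\hat\omega_t \;\ge\; n\left(\frac{\hat\omega_t^n}{\omega_t^n}\right)^{1/n} \;\ge\; cn\,X^{1/n};
\end{equation*}
since $x\mapsto \log x-\tfrac{cn}{2}x^{1/n}$ is bounded above on $(0,\infty)$, half of the good term $-A\Tr_{\omega_t}\hat\omega_t$ absorbs $A\log X$ up to $CA$, and the other half contributes $-\frac{At}{2T}\Tr_{\omega_t}\pi^*\omega_\Sigma$, which dominates the curvature term $C_0\Tr_{\omega_t}\pi^*\omega_\Sigma$ from the parabolic Schwarz computation once $A$ is large and $t\ge T/2$ (on $[0,T/2]$ all quantities are bounded by smoothness of the flow). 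Converting the bound on $Q$ into a bound on $\log\Tr_{\omega_t}\pi^*\omega_\Sigma$ only requires $|\varphi_t|\le C$, which follows from the maximum principle applied to \eqref{MAP} together with \eqref{volume_form_hat}. In particular, none of this uses $\Ric(\omega_t)\le B\omega_0$: in this lemma the Ricci bound enters solely through Lemma \ref{ddt_phi:bbd2}, which you already invoke, and your fallback $\omega_t\ge(1-Bt)\omega_0$ is unnecessary (and in any case useless near $T$ when $1/B<T$). So your outline is correct and matches the paper; to complete it, either cite the Schwarz-lemma estimate as the paper does, or close the maximum principle with the absorption argument above.
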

\begin{proof}
Denote $\tilde\varphi_t = \frac{\varphi_t - \Phi_t}{T-t}$. For each $z \in \Sigma$, we have
$\hat\omega_{t,z} = \frac{T-t}{T}\omega_{0,z}$, and so 
\[\omega_{t,z} = \frac{T-t}{T}\omega_{0,z} + \eval{\sqrt{-1}\ddbar\varphi_t}{\pi^{-1}(z)}.\]
Since $\Phi_t$ depends only on $z \in \Sigma$, we have $\eval{\sqrt{-1}\ddbar\Phi_t}{\pi^{-1}(z)} = 0$. By rearranging, we have
\begin{equation}\label{metric_restricted}
\frac{1}{T-t}\omega_{t,z} = \frac{1}{T}\omega_{0,z} + \eval{\sqrt{-1}\ddbar\tilde\varphi_t}{\pi^{-1}(z)}.
\end{equation}
Regard \eqref{metric_restricted} to be a metric equation on the manifold $\pi^{-1}(z)$, then we have
\begin{equation}\label{volume_r}
\left(\frac{1}{T}\omega_{0,z} + \eval{\sqrt{-1}\ddbar\tilde\varphi_t}{\pi^{-1}(z)}\right)^r = \left(\frac{1}{T-t} \omega_{t,z}\right)^r
\end{equation}
Using Lemma \ref{ddt_phi:bbd2}, we can see
\begin{align}\label{volume_r_bbd}
\frac{\omega_{t,z}^r}{\omega_{0,z}^r} & = \frac{\omega_t^r \wedge (\pi^*\omega_\Sigma)^{n-r}}{\omega_0^r \wedge (\pi^*\omega_\Sigma)^{n-r}}\\
\nonumber & \leq \frac{\omega_t^r \wedge (\pi^*\omega_\Sigma)^{n-r}}{\omega_t^n} \cdot \frac{\omega_t^n}{\omega_0^r \wedge (\pi^*\omega_\Sigma)^{n-r}}\\
\nonumber & \leq C(\Tr_{\omega_t}\pi^*\omega_\Sigma)^r \cdot (T-t)^r.
\end{align}
It is well-known that $\Tr_{\omega_t} \pi^*\omega_\Sigma \leq C$ by the cohomological condition \eqref{Kahler_class} and a maximum principle argument (see e.g. \cite{SW09, SSW11}). Combining this with 
\eqref{volume_r_bbd}, we see than \eqref{volume_r} can be restated as
\begin{equation}
\left(\frac{1}{T}\omega_{0,z} + \eval{\sqrt{-1}\ddbar\tilde\varphi_t}{\pi^{-1}(z)}\right)^r = F_z(\xi,t) \left(\frac{1}{T}\omega_{0,z}\right)^r
\end{equation}
where $F_z(\xi,t) : \pi^{-1}(z) \times [0,T) \to \R_{>0}$ is uniformly bounded.

Since $\int_{\pi^{-1}(z)} \tilde\varphi_t \omega_{0,z}^r = 0$, by applying Yau's $L^\infty$-estimate (see \cite{Y78}) on \eqref{metric_restricted}, we then have
\begin{equation}
\sup_{\pi^{-1}(z) \times [0,T)}|\tilde\varphi_t| \leq C_z,
\end{equation}
where $C_z$ depends on $n, r, \omega_0, \omega_\Sigma, B, \sup_{\pi^{-1}(z) \times [0,T)}F_z, \Vol_{\omega_{0,z}}(\pi^{-1}(z))$, the Sobolev and Poincar\'e constants of $\pi^{-1}(z)$ with respect to metric $\omega_{0,z}$, all of which can be bounded uniformly independent of $z$. It completes the proof of the lemma.
\end{proof}
\begin{remark}
In our setting, the uniform boundedness of Sobolev and Poincar\'e constants of $(\pi^{-1}(z), \omega_{0,z})$ follow from the compactness of $\Sigma$ and the absence of singular fibers. It is also possible to give such uniform bounds by noting that $\pi^{-1}(z)$'s are minimal submanifolds of $X$ and hence they can be embedded into some Euclidean space $\R^N$ with bounded mean curvature. Combining the classical results in \cite{MS73, Ch75, LY80} one can obtain uniform bounds on the Sobolev and Poincar\'e constants. A detail discussion in this regard can be found in \cite{To10} which also dealt with singular fibers.
\end{remark}

\section{Proof of Theorem \ref{main}}
\begin{proof}[Proof of Theorem \ref{main}]
We apply maximum principle to the following quantity
\[Q := \log((T-t)\Tr_{\omega_t}\omega_0) - \frac{A}{T-t}(\varphi_t - \Phi_t),\]
where $A$ is a positive constant to be chosen.
Denote $\square_{\omega_t} = \partial_t - \Delta_{\omega_t}$, we have
\begin{align}\label{metric_lower_bd:ineq1}
\square_{\omega_t} \log((T-t)\Tr_{\omega_t}\omega_0) & \leq -\frac{1}{T-t} + \frac{1}{\Tr_{\omega_t}\omega_0} g^{i\bar j} g^{k\bar l} \Rm(g_0)_{i\bar j k\bar l}\\
\nonumber & \leq -\frac{1}{T-t} + \tilde{C}\Tr_{\omega_t}\omega_0
\end{align}
where $\tilde{C}$ depends only on the curvature of $g_0$.
\begin{align*}
\square_{\omega_t} \frac{A}{T-t}(\varphi_t - \Phi_t) & = \frac{A}{T-t}\left(\D{\varphi_t}{t} - \D{\Phi_t}{t}\right) - \frac{A}{(T-t)^2}(\varphi_t - \Phi_t)\\
& \quad - \frac{A}{T-t}(\Delta_{\omega_t} \varphi_t - \Delta_{\omega_t} \Phi_t)\\
& \geq \frac{A}{T-t}\left(\log\frac{\omega_t^n}{(T-t)^r\Omega} - \int_{\pi^{-1}(z)} \D{\varphi_t}{t} \omega_{0,z}^r \right) - \frac{CA}{T-t}\\
& \quad - \frac{A}{T-t}(n-\Tr_{\omega_t}\hat\omega_t - \Delta_{\omega_t}\Phi_t)
\end{align*}
Note that
\[\Tr_{\omega_t}\hat\omega_t \geq n\left(\frac{\hat\omega_t^n}{\omega_t^n}\right)^{1/n}\geq cn\left(\frac{(T-t)^r\Omega}{\omega_t^n}\right)^{1/n}.\]
Since $x \mapsto -\log x + \frac{1}{2}cnx^{1/n}$ is uniformly bounded from below, we have
\begin{equation}
\log\frac{\omega_t^n}{(T-t)^r\Omega} + \frac{1}{2}\Tr_{\omega_t}\hat\omega_t \geq -\log\frac{(T-t)^r\Omega}{\omega_t^n} + \frac{1}{2}cn\left(\frac{(T-t)^r\Omega}{\omega_t^n}\right)^{1/n} \geq C^{-1}
\end{equation}
for some uniform constant $C > 0$. Hence, we have
\begin{align}\label{metric_lower_bd:ineq2}
\square_{\omega_t} \frac{A}{T-t}(\varphi_t - \Phi_t) & \geq -\frac{AC}{T-t}\\
\nonumber & \quad + \frac{A}{T-t} \left(\Delta_{\omega_t}\Phi_t-\int_{\pi^{-1}(z)} \D{\varphi_t}{t} \omega_{0,z}^r\right)\\
\nonumber & \quad + \frac{A}{2(T-t)}\Tr_{\omega_t}{\hat\omega_t}.
\end{align}
Combining \eqref{metric_lower_bd:ineq1} and \eqref{metric_lower_bd:ineq2}, we have
\begin{align}\label{metric_lower_bd:ineq3}
\square_{\omega_t} Q & \leq \frac{AC}{T-t} + \tilde{C}\Tr_{\omega_t}\omega_0 - \frac{A}{2(T-t)}\Tr_{\omega_t}\left(\frac{T-t}{T}\omega_0 + \frac{t}{T}\pi^*\omega_\Sigma\right)\\
\nonumber & \quad - \frac{A}{T-t} \left(\Delta_{\omega_t}\Phi_t-\int_{\pi^{-1}(z)} \D{\varphi_t}{t} \omega_{0,z}^r\right)\\
\nonumber & \leq \frac{AC}{T-t} + \left(\tilde{C} - \frac{A}{2T}\right)\Tr_{\omega_t}\omega_0\\
\nonumber & \quad - \frac{A}{T-t} \left(\Delta_{\omega_t}\Phi_t-\int_{\pi^{-1}(z)} \D{\varphi_t}{t} \omega_{0,z}^r\right)
\end{align}
By Lemma \ref{ddt_phi:bbd2}, we have $\D{\varphi_t}{t}$ for some uniform constant $C$. It follows that
\[\int_{\pi^{-1}(z)} \D{\varphi_t}{t} \omega_{0,z}^r \leq C\Vol_{\omega_{0,z}} (\pi^{-1}(z)) \leq C'.\]
Note that $\Vol_{\omega_{0,z}} (\pi^{-1}(z))$ is independent of $z$.
For the Laplacian term of $\Phi_t$, we have
\begin{align*}
\Delta_{\omega_t} \int_{\pi^{-1}(z)}\varphi_t \omega_{0,z}^r & = \Tr_{\omega_t}\int_{\pi^{-1}(z)} \sqrt{-1}\ddbar\varphi_t \wedge \omega_{0,z}^r\\
& = \Tr_{\omega_t}\int_{\pi^{-1}(z)} (\omega_t-\hat\omega_t) \wedge \omega_{0,z}^r\\
& \geq -\Tr_{\omega_t} \int_{\pi^{-1}(z)} \hat\omega_t \wedge \omega_{0,z}^r\\
& \geq -\Tr_{\omega_t}\int_{\pi^{-1}(z)}\left(\omega_0 \wedge \omega_{0,z}^r + \pi^*\omega_\Sigma \wedge \omega_{0,z}^r\right).
\end{align*}
By the fact that $\Tr_{\omega_t}\pi^*\omega_\Sigma \leq C$ and $\int_{\pi^{-1}(z)}\left(\omega_0 \wedge \omega_{0,z}^r + \pi^*\omega_\Sigma \wedge \omega_{0,z}^r\right)$ is a $(1,1)$-form on $\Sigma$ independent of $t$, we have 
\[\Delta_{\omega_t} \int_{\pi^{-1}(z)}\varphi_t \omega_{0,z}^r \geq -C.\]
for some uniform constant $C$. Back to \eqref{metric_lower_bd:ineq3}, we have
\begin{equation}
\square_{\omega_t} Q \leq \frac{AC}{T-t} + \left(\tilde{C}-\frac{A}{2T}\right)\Tr_{\omega_t}\omega_0 \leq \frac{AC}{T-t} - \Tr_{\omega_t}\omega_0
\end{equation}
if we choose $A$ sufficiently large such that $\tilde{C}-\frac{A}{2T} < -1$.

Hence, for any $\varepsilon > 0$, at the point where $Q$ achieves its maximum over $X \times [0,T-\varepsilon]$, we have $\Tr_{\omega_t}(T-t)\omega_0 \leq C$ for some uniform constant $C$ independent of $\varepsilon$. Together with Lemma \ref{potential_bbd}, it shows that for any $t \in [0,T)$ we have,
\begin{equation}\label{metric_lower_bbd:semifinial}
C^{-1}(T-t)\omega_0 \leq \omega_t.
\end{equation}
Combining this with the fact that $\omega_t \geq C^{-1}\pi^*\omega_\Sigma$, we have
\begin{equation}\label{metric_lower_bbd:final}
C^{-1}\hat\omega_t \leq \omega_t.
\end{equation}
Together with \eqref{volume_form_bbd} and \eqref{volume_r_bbd}, we also have $\omega_t \leq C\hat\omega_t$ for any $t \in [0,T)$. It completes the proof of the theorem since one can check that for the reference metric $\hat\omega_t$, we have
\[C^{-1}(T-t)^{1/2} \leq \diam_{\hat\omega_t}(\pi^{-1}(z)) \leq C(T-t)^{1/2}.\]
\end{proof}
\section{Remarks on curvatures}\label{sec:curvature}
We would like to end this article by a discussion of the implications of Theorem \ref{main} on the blow-up rate of curvatures.

There are ``folklore conjectures" concerning the blow-up of curvatures along the K\"ahler-Ricci flow \eqref{KRF} with finite-time singularity (see Section 7 in \cite{SW_KRF}). It is known by Hamilton \cite{H95}, Sesum \cite{Se05} and Zhang \cite{Z10} that $\sup \|\Rm\|_{g(t)}$, $\sup \|\Ric\|_{g(t)}$ and the scalar curvature $\sup R(g(t))$ must blow-up to $+\infty$ as $t \to T$ when $T < \infty$ is the singular time. However, it is still open whether they blow-up at the rate of $O((T-t)^{-1})$ for the K\"ahler-Ricci flow \eqref{KRF}.

In the case where $[\omega_0] = c_1(X) > 0$, it was established by Perelman (see \cite{SeT08}) that $R(g(t)) = O((T-t)^{-1})$ along \eqref{KRF}. For the normalized K\"ahler-Ricci flow $\partial_t \tilde{g}_t = -\Ric(\tilde{g}_t) - \tilde{g}_t$ with finite-time singularity, Zhang established in \cite{Z10} that $R(\tilde{g}(t)) = O((T-t)^{-2})$ under a cohomological assumption analogous to \eqref{Kahler_class}. In the special case where $X$ is a $\CP^1$-bundle over a compact K\"ahler-Einstein manifold, it is proved in \cite{Fong1} that when the $\CP^1$-fibers collapse the K\"ahler-Ricci flow \eqref{KRF} must develop Type I singularity (i.e. $\|\Rm\|_{g(t)} = O((T-t)^{-1})$) assuming that the initial metric has Calabi symmetry.

Under the cohomological setting in this article, the implications of the boundedness of $\Tr_{\omega_0}\Ric(\omega_t)$ on the curvature blow-up rates are as follows:
\begin{corollary}
Under the same assumptions as in Theorem \ref{main}, we have
\begin{align*}
& \Ric(\omega_t) \leq B\omega_0 \text{ for some uniform constant } B > 0\\
\Rightarrow & \quad R(\omega(t)) = O((T-t)^{-1}), \text{ and } \|\Rm\|_{\omega_t} = O((T-t)^{-2}).
\end{align*}
\end{corollary}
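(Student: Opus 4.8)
The plan is to read off both curvature estimates from the two-sided metric equivalence $C^{-1}\hat\omega_t \le \omega_t \le C\hat\omega_t$ established in the proof of Theorem \ref{main}, together with the standing hypothesis $\Ric(\omega_t)\le B\omega_0$; the scalar and Ricci bounds are soft consequences, while the full-curvature bound carries the real content. First I would bound the scalar curvature $R(\omega_t)=\Tr_{\omega_t}\Ric(\omega_t)$. For the upper bound, the hypothesis gives $R(\omega_t)\le B\,\Tr_{\omega_t}\omega_0$, and since \eqref{metric_lower_bbd:semifinial} reads $C^{-1}(T-t)\omega_0\le\omega_t$, i.e.\ $\omega_0\le C(T-t)^{-1}\omega_t$, we obtain $\Tr_{\omega_t}\omega_0\le Cn(T-t)^{-1}$ and hence $R(\omega_t)\le C(T-t)^{-1}$. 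For the lower bound I would use the evolution equation $\partial_t R=\Delta_{\omega_t}R+\abs{\Ric(\omega_t)}^2\ge\Delta_{\omega_t}R$: the minimum principle makes $\min_X R(\cdot,t)$ nondecreasing, so $R(\omega_t)\ge\min_X R(\cdot,0)=:-C_0\ge -C(T-t)^{-1}$, the last step using $T-t\le T$. Together these give $R(\omega_t)=O((T-t)^{-1})$.

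Next, to control $\norm{\Ric}_{\omega_t}$ I would diagonalise $\Ric(\omega_t)$ against $\omega_t$, with eigenvalues $\mu_1,\dots,\mu_n$. The upper bound $\Ric(\omega_t)\le B\omega_0\le C(T-t)^{-1}\omega_t$ gives $\mu_i\le C(T-t)^{-1}$, while $\sum_i\mu_i=R(\omega_t)\ge -C_0$ forces $\mu_i\ge -C_0-(n-1)C(T-t)^{-1}\ge -C(T-t)^{-1}$. Hence $\norm{\Ric}_{\omega_t}^2=\sum_i\mu_i^2\le C(T-t)^{-2}$, so $\norm{\Ric}_{\omega_t}=O((T-t)^{-1})$.

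The heart of the matter is $\norm{\Rm}_{\omega_t}=O((T-t)^{-2})$, where the anisotropy of the collapse dictates the exponent. Writing $\norm{\Rm}_{\omega_t}^2=g^{i\bar a}g^{j\bar b}g^{k\bar c}g^{l\bar d}R_{i\bar jk\bar l}\overline{R_{a\bar bc\bar d}}$ and using $g^{-1}\le C(T-t)^{-1}g_0^{-1}$ together with the fact that the contraction in each index slot is against a positive semidefinite (Gram-matrix) tensor, one obtains the index-raising bound $\norm{\Rm}_{\omega_t}^2\le C(T-t)^{-4}\,\abs{\Rm(\omega_t)}_{\omega_0}^2$, where the right-hand norm is measured against the \emph{fixed} background $\omega_0$. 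This reduces the claim to the background estimate $\abs{\Rm(\omega_t)}_{\omega_0}\le C$, i.e.\ that the curvature tensor of $\omega_t$ stays bounded when read off in $\omega_0$. Note that the four raised indices alone already supply the full $(T-t)^{-4}$, which is precisely why the method yields $(T-t)^{-2}$ and not the sharper Type I rate $(T-t)^{-1}$; the latter would demand the stronger vanishing $\abs{\Rm(\omega_t)}_{\omega_0}=O(T-t)$, which fails in the presence of a nontrivial collapsing fibration.

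The step I expect to be the main obstacle is exactly this uniform background curvature bound. The natural attack is a maximum-principle argument for $\abs{\Rm}^2$ along \eqref{KRF}, feeding in the control on $\Tr_{\omega_t}\omega_0$ from Theorem \ref{main} and on $\norm{\Ric}_{\omega_t}$ from the previous step to dominate the reaction terms; equivalently, one may run a parabolic fourth-order (Evans--Krylov plus Schauder) estimate for the potential $\varphi_t$ in \eqref{MAP}. The difficulty is that the evolution of $\abs{\Rm}^2$ carries a cubic term of schematic type $\Rm\ast\Rm\ast\Rm$ which the maximum principle does not close on its own, so the degenerating geometry must be injected quantitatively through the $O((T-t)^{-1})$ control of $\Tr_{\omega_t}\omega_0$ in order to trap $\abs{\Rm(\omega_t)}_{\omega_0}$ and extract the stated rate.
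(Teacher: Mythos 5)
Your treatment of the scalar curvature is correct and complete: the upper bound from $\Ric(\omega_t)\le B\omega_0$ together with $\Tr_{\omega_t}\omega_0 \le Cn(T-t)^{-1}$ (from \eqref{metric_lower_bbd:semifinial}), and the lower bound from the minimum principle applied to $\partial_t R = \Delta_{\omega_t} R + \abs{\Ric}^2$, give $R(\omega_t)=O((T-t)^{-1})$; this is exactly what the paper means by ``follows trivially,'' with the details you supply. The problem is the full curvature bound, which you yourself flag as ``the main obstacle'' and then do not prove. Your index-raising reduction $\norm{\Rm}_{\omega_t} \le C(T-t)^{-2}\abs{\Rm(\omega_t)}_{\omega_0}$ is algebraically correct, but it leaves the entire content of the corollary hanging on the background estimate $\abs{\Rm(\omega_t)}_{\omega_0}\le C$, for which you offer only two gestured strategies (a maximum principle on $\abs{\Rm}^2$, which you admit does not close because of the cubic reaction term, or an unspecified Evans--Krylov/Schauder argument). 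That is a genuine gap, not a routine verification: bounding the lowered-index curvature uniformly in a fixed background metric is precisely a Calabi-type fourth-order estimate on the potential with explicit constant tracking, i.e.\ it is the hard analytic content of the statement, not a lemma one can defer.

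Worse, your reduction asks for \emph{more} than the corollary needs. Since the proof of Theorem \ref{main} also gives $\omega_t \le C\omega_0$, one has the reverse comparison $\abs{\Rm(\omega_t)}_{\omega_0} \le C^2\norm{\Rm}_{\omega_t}$, so the conclusion $\norm{\Rm}_{\omega_t}=O((T-t)^{-2})$ only yields $\abs{\Rm(\omega_t)}_{\omega_0}=O((T-t)^{-2})$; your target $\abs{\Rm(\omega_t)}_{\omega_0}\le C$ is a strictly stronger assertion (it would force, e.g., all base-direction components of the curvature to stay bounded), and nothing in the paper or in your sketch establishes it. The paper avoids this entirely: it feeds the two-sided bound $C^{-1}(T-t)\omega_0 \le \omega_t \le C\omega_0$ into the interior derivative estimates of Phong--Sesum--Sturm \cite{PSS07} and Sherman--Weinkove \cite{ShW11}, which bound $\norm{\Rm}_{\omega_t}$ \emph{directly, measured in $\omega_t$ itself}, in terms of the metric equivalence constants: if $N\omega_0 \le \omega_t \le C\omega_0$ with $N=N(t)$ positive and non-increasing, their arguments give $\norm{\Rm}_{\omega_t}=O(N^{-2})$, and taking $N(t)\sim T-t$ finishes the proof. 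If you want to complete your argument, the fix is to abandon the background-norm reformulation and instead adapt those third-order (Calabi-type) estimates with the $N$-dependence tracked, which is exactly what the paper does.
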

\begin{proof}
The $O((T-t)^{-1})$ blow-up rate of the scalar curvature $R(\omega_t) = \Tr_{\omega_t}\Ric(\omega_t)$ follows trivially from $\Ric(\omega_t) \leq B\omega_0$ and \eqref{metric_lower_bbd:semifinial}.

For the blow-up of the Riemann curvature tensor, we use the result in the proof of Theorem \ref{main} that there exists a uniform constant $C > 0$ such that for any $t \in [0,T)$, we have
\[C^{-1}(T-t)\omega_0 \leq \omega_t \leq C\omega_0.\]
Using the estimates in \cite{PSS07, ShW11} one can establish that 
\[\|\Rm\|_{\omega_t} = O((T-t)^{-2}).\]
Note that in \cite{ShW11} the authors assumed $N\omega_0 \leq \omega_t \leq \frac{1}{N}\omega_0$ and asserted that $\|\Rm\|_{\omega_t} = O(N^{-4})$. One can easily check this result can be extended without much difficulty so that $N$ is any positive non-increasing function $N(t)$ defined on $[0,T)$. Furthermore, if the metric upper bound $\frac{1}{N}\omega_0$ is replaced by $C\omega_0$ for some uniform constant $C > 0$, the result can be generalized to $\|\Rm\|_{\omega_t} = O(N^{-2})$ with an almost identical proof.
\end{proof}
\subsection*{Acknowledgements}
The author would like to thank his advisor Richard Schoen for his continuing encouragement and many inspiring ideas. He would also like to thank Simon Brendle and Yanir Rubinstein for many productive discussions and their interests in his work.
\bibliographystyle{amsalpha}
\bibliography{../citations}
\end{document}